\newtheorem{them}{Theorem}[section]
\newtheorem{prop}[them]{Proposition}
\newtheorem{lem}[them]{Lemma}
\newtheorem{cor}[them]{Corollary}
\newcommand{\xrightarrow}{\hbox to 11mm{\rightarrowfill}}
\renewcommand{\le}{\leq}
\renewcommand{\ge}{\geq}
\newcommand{\mcl}{\mathcal}
\newcommand{\mb}{\mathbf}
\newcommand{\mbx}{\mathbf{X}}
\newcommand{\E}{\mathbb{E}}
\newcommand{\N}{\mathbb{N}}
\newcommand{\R}{\mathbb{R}}
\newcommand{\Z}{\mathbb{Z}}
\renewcommand{\P}{\mathbb{P}}
\newcommand{\ov}{\overline}
\renewcommand{\d}{{\mathrm{d}}}
\renewcommand{\phi}{\varphi}
\begin{document}
\begin{frontmatter}

\title{On the rate of convergence in the martingale central limit theorem}
\runtitle{Rate of convergence in the martingale central limit theorem}

\begin{aug}
\author{\fnms{Jean-Christophe} \snm{Mourrat}\corref{}\ead[label=e1]{jean-christophe.mourrat@epfl.ch}}
\runauthor{J.-C. Mourrat} 
\address{Ecole polytechnique f\'{e}d\'{e}rale de Lausanne, institut
de math\'{e}matiques, station 8, 1015 Lausanne, Switzerland}
\end{aug}

\received{\smonth{6} \syear{2011}}
\revised{\smonth{10} \syear{2011}}

%
\begin{abstract}
Consider a discrete-time martingale, and let $V^2$ be its normalized
quadratic variation.
As $V^2$ approaches~$1$, and provided that some Lindeberg condition is
satisfied, the
distribution of the rescaled martingale approaches the Gaussian
distribution. For
any $p \ge1$, (\textit{Ann. Probab.} \textbf{16} (1988) 275--299)
gave a bound on the
rate of convergence in this central
limit theorem that is the sum of two terms, say $A_p + B_p$, where up
to a constant,
$A_p = {\|V^2-1\|}_p^{p/(2p+1)}$. Here we discuss the optimality of
this term, focusing
on the restricted class of martingales with bounded increments. In this context,
(\textit{Ann. Probab.} \textbf{10} (1982) 672--688) sketched a
strategy to prove optimality for $p = 1$. Here we extend this
strategy to any $p \ge1$, thereby justifying the optimality of the
term $A_p$. As a
necessary step, we also provide a new bound on the rate of convergence
in the central
limit theorem for martingales with bounded increments that improves on
the term $B_p$,
generalizing another result of (\textit{Ann. Probab.} \textbf{10}
(1982) 672--688).
\end{abstract}

%
\begin{keyword}
\kwd{central limit theorem}
\kwd{martingale}
\kwd{rate of convergence}
\end{keyword}

\end{frontmatter}

\section{Introduction}\label{s:intro}
Let $\mb{X} = (X_1,\ldots,X_n)$ be a sequence of square-integrable
random variables such that for any~$i$, $X_i$ satisfies $\E[X_i \vert \mcl
{F}_{i-1}] = 0$, where $\mcl{F}_i$ is the $\sigma$-algebra generated
by $(X_1,\ldots,X_i)$. In other words, $\mbx$ is a square-integrable
martingale difference sequence. Following the notation of~\cite{bo},
we write $M_n$ for the set of all such sequences of length $n$, and introduce
\begin{eqnarray*}
s^2(\mb{X}) &=& \sum_{i = 1}^n \E[X_i^2],\\
V^2(\mb{X}) &=& s^{-2}(\mbx) \sum_{i = 1}^n \E[X_i^2 \vert \mcl{F}_{i-1}],\\
S(\mb{X}) &=& \sum_{i = 1}^n X_i.
\end{eqnarray*}
$V^2(\mb{X})$ can be called the normalized quadratic variation of $\mb{X}$.
Let $(\mb{X}_n)_{n \in\N}$ be such that for any $n$, $\mb{X}_n \in
M_n$. It is well known (see, e.g.,~\cite{durrett}, Section 7.7.a) that if
%
\begin{equation}\label{convprob}
V^2(\mb{X}_n) \mathop{\mathop{\xrightarrow}_{n \to+ \infty}}^{\mathrm{(prob.)}} 1
\end{equation}
and some Lindeberg condition is satisfied, then the rescaled sum $S(\mb
{X}_n)/s(\mb{X}_n)$ converges in distribution to a standard Gaussian
random variable, that is,
%
\begin{equation}\label{tcl}
\forall t \in\R,\qquad \P[S(\mb{X}_n)/s(\mb{X}_n) \le t ] \mathop{\xrightarrow}_{n \to+ \infty} \Phi(t),
\end{equation}
where $\Phi(t) = (2\uppi)^{-1/2} \int_{-\infty}^t \mathrm{e}^{-x^2/2} \,\d x$.

We are interested in bounds on the speed of convergence in this central
limit theorem. Several results have been obtained under a variety of
additional assumptions. One natural way to strengthen the convergence
in probability (\ref{convprob}) is to change it for a convergence in
$L^p$ for some $p \in[1,+\infty]$. Indeed, quantitative estimates in
terms of $\|V^2 - 1 \|_p$ seem particularly convenient when the aim is
to apply the result to practical situations. We write
\[
D(\mbx) = \sup_{t \in\R} | \P[S(\mb{X})/s(\mb{X}) \le t ] - \Phi
(t) |
\]
and
\[
\|\mb{X}\|_p = \max_{1 \le i \le n} \|X_i\|_p \qquad(p \in[1,+\infty]).
\]
\cite{ha88} proved the following result.
\begin{them}[(\cite{ha88})]
\label{t:ha}
Let $p \in[1 , +\infty)$. There exists a constant $C_{p} > 0$ such
that for any $n \ge1$ and any $\mbx\in M_n$,
%
\begin{equation}\label{eq:ha}
D(\mbx) \le C_{p} \Biggl({\|V^2(\mbx)-1\|}_p^{p} + s^{-2p}(\mbx) \sum_{i
= 1}^n {\|X_i\|}_{2p}^{2p} \Biggr)^{1/(2p+1)}.
\end{equation}
\end{them}

In~\cite{joos}, Theorem~\ref{t:ha} is generalized to the following.
\begin{them}[(\cite{joos})]
Let $p \in[1,+\infty]$ and $p' \in[1,+\infty)$. There exists
$C_{p,p'} > 0$ such that for any $n \ge1$ and any $\mbx\in M_n$,
%
\begin{equation}\label{eq:joos}
D(\mbx) \le C_{p,p'}\Biggl [{\|V^2(\mbx)-1\|}_p^{p/(2p+1)} + \Biggl(s^{-2p'}(\mbx
) \sum_{i = 1}^n {\|X_i\|}_{2p'}^{2p'}\Biggr )^{1/(2p'+1)} \Biggr].
\end{equation}
\end{them}

Here $p/(2p+1) = 1/2$ for $p = + \infty$. In fact, a stronger,
nonuniform bound is given; see~\cite{joos}, Theorem~2.2 (or,
equivalently,~\cite{pubjoos}), for details.

The main question addressed here concerns the optimality of the term
$\|V^2(\mbx)-\break 1\|_p^{p/(2p+1)}$ appearing in the right-hand side of
(\ref{eq:ha}) or (\ref{eq:joos}). About this,~\cite{ha88}
constructed a sequence of elements $\mbx_n \in M_n$ such that
\begin{itemize}
\item $s(\mbx_n) \simeq\sqrt{n}$,
\item $D(\mbx_n) \simeq\log^{-1/2}(n)$,
\item ${\|V^2(\mbx)-1\|}_p^{p} \simeq s^{-2p}(\mbx) {\|\mbx\|
}^{2p}_{2p} \simeq s^{-2p}(\mbx) \sum_{i = 1}^n {\|X_i\|}_{2p}^{2p}
\simeq\log^{-(2p+1)/2}(n)$,
\end{itemize}
where we write $a_n \simeq b_n$ if there exists $C > 0$, such that
$a_n/C \le b_n \le C a_n$ for all sufficiently large $n$. This example
demonstrates that the exponent $1/(2p+1)$ appearing on the outer
bracket of the right-hand side of (\ref{eq:ha}) cannot be improved. But
because the two terms of the right-hand side of (\ref{eq:ha}) are of
the same order, no conclusions can be drawn about the optimality of the
term $ {\|V^2(\mbx)-1\|}_p^{p/(2p+1)}$ alone. Most importantly, it is
rather disappointing that in the example, ${\|\mbx\|}^{2p}_{2p}$ and
$\sum_{i = 1}^n {\|X_i\|}_{2p}^{2p}$ are of the same order, if the
typical martingales that one is interested in have increments of
roughly the same order.

Using a similar construction, but also imposing the condition that
$V^2(\mbx) = 1$ a.s.,~\cite{joos}, Example~2.4, proved the optimality
of the exponent $1/(2p'+1)$ appearing in the second term of the sum in
the right-hand side of (\ref{eq:joos}). However, the author did not
discuss the optimality of the first term ${\|V^2(\mbx)-1\|}_p^{p/(2p+1)}$.

For $1 \le p \le2$, Theorem~\ref{t:ha} was in fact already proved by
\cite{hb}. In~\cite{hh}, Section~3.6, the authors could show
only that the bound on $D(\mbx)$ can be no better than $\|V^2(\mbx) -
1 \|_1^{1/2}$.

The proof of Theorem~\ref{t:ha} given by~\cite{ha88} is inspired by a
method introduced by~\cite{bo}, who proved the following results.
\begin{them}[(\cite{bo})]\label{t:bo}
Let $\gamma\in(0,+\infty)$. There exists a constant $C_{\gamma} >
0$ such that for any $n \ge2$ and any $\mbx\in M_n$ satisfying $\|
\mbx\|_{\infty} \le\gamma$ and $V^2(\mbx) = 1$ a.s.,
\[
D(\mbx) \le C_{\gamma} \frac{n \log(n)}{s^3(\mbx)}.
\]
\end{them}

Typically, $s(\mbx)$ is of order $\sqrt{n}$ when $\mbx\in M_n$.
Under such circumstances, Theorem~\ref{t:bo} thus gives a bound of
order $\log(n)/\sqrt{n}$. Moreover,~\cite{bo} provided an example of
a sequence of elements $\mbx_n \in M_n$ satisfying the conditions of
Theorem~\ref{t:bo}, such that $s^2(\mbx_n) = n$ and for which
\[
\limsup_{n \to+ \infty} \sqrt{n} \log^{-1}(n) D(\mbx_n) > 0,
\]
and so the result is optimal.

Relaxing the condition that $V^2(\mbx) = 1$ a.s.,~\cite{bo} then
showed the following result.
\begin{cor}[(\cite{bo})]
\label{cor:bo0}
Let $\gamma\in(0,+\infty)$. There exists a constant $\ov{C}_{\gamma
} > 0$ such that for any $n \ge2$ and any $\mbx\in M_n$ satisfying $\|
\mbx\|_{\infty} \le\gamma$,
%
\begin{equation}
\label{eq:bo0}
D(\mbx) \le\ov{C}_{\gamma} \biggl[ \frac{n \log(n)}{s^3(\mbx)} + \min
\bigl({\|V^2(\mbx)-1\|}_1^{1/3},{\|V^2(\mbx)-1\|}_\infty^{1/2} \bigr) \biggr].
\end{equation}
\end{cor}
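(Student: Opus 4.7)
The plan is to deduce Corollary~\ref{cor:bo0} from Theorem~\ref{t:bo} by coupling $\mbx$ to a bounded martingale $\td{\mbx}$ with $V^2(\td{\mbx}) = 1$ almost surely, and then transferring the resulting bound back to $\mbx$. Write $\sigma^2 = s^2(\mbx)$, fix a free parameter $\eta > 0$, and set $\Sigma^2 = (1+\eta)\sigma^2$. On an enlarged probability space carrying an i.i.d.\ Rademacher sequence $(\epsilon_i)_{i\ge 1}$ independent of $\mcl{F}_n$, construct $\td{\mbx} \in M_{n+N}$ with $N \lesssim n$ as follows: on the good event $A = \{V^2(\mbx) \le 1+\eta\}$, append to $\mbx$ the increments $Y_i = a_i\epsilon_i$ with $\mcl{F}_n$-measurable amplitudes $a_i \in [0,\gamma]$ chosen so that $\sum_i a_i^2 = \Sigma^2 - \sigma^2 V^2(\mbx)$; on $A^c$, stop $\mbx$ at the first time $T$ when its cumulative conditional variance reaches $\Sigma^2$, rescaling $X_T$ to make the crossing exact. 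In both cases $\td{\mbx}$ satisfies $\|\td{\mbx}\|_\infty \le \gamma$, $s^2(\td{\mbx}) = \Sigma^2$, and $V^2(\td{\mbx}) = 1$ almost surely.

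Theorem~\ref{t:bo} applied to $\td{\mbx}$ yields $D(\td{\mbx}) \le C_\gamma\, n\log(n)/\sigma^3$, producing the first term of~(\ref{eq:bo0}). The triangle inequality for the Kolmogorov distance then gives
\begin{equation*}
D(\mbx) \le D(\td{\mbx}) + C\eta + \sup_{t}\bigl|\P[S(\mbx)/\sigma \le t,\,A] - \P[S(\td{\mbx})/\sigma \le t,\,A]\bigr| + \P[A^c],
\end{equation*}
where $C\eta$ absorbs the rescaling distortion between $\sigma$ and $\Sigma$, and $\P[A^c] \le \|V^2(\mbx)-1\|_1/\eta$ by Markov's inequality (and vanishes in the $L^\infty$ case when $\eta = \|V^2(\mbx)-1\|_\infty$). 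The central step is to show that the third term is $\lesssim \sqrt{\eta}$: on $A$, $S(\td{\mbx}) - S(\mbx) = Y$, and conditionally on $\mcl{F}_n$, $Y/\sigma$ is a sum of independent bounded symmetric Rademacher-type increments of total variance at most $\eta$; by the classical Berry--Esseen theorem its conditional law is close to $N(0,R/\sigma^2)$, and combining this with the near-Gaussianity of the law of $S(\td{\mbx})/\sigma$ inherited from Theorem~\ref{t:bo} yields the desired convolution-type bound.

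Optimizing over $\eta$ produces the two halves of the stated minimum: in the $L^\infty$ case, taking $\eta = \|V^2(\mbx)-1\|_\infty$ eliminates $\P[A^c]$ and gives the error $\sqrt{\eta}=\|V^2(\mbx)-1\|_\infty^{1/2}$; in the $L^1$ case, balancing $\sqrt{\eta} + \|V^2-1\|_1/\eta$ with $\eta\sim\|V^2-1\|_1^{2/3}$ produces $\|V^2-1\|_1^{1/3}$. The main obstacle is precisely the $\sqrt{\eta}$ estimate on the third term: a naive Chebyshev-plus-shift argument yields only the weaker exponent $\eta^{1/3}$, and recovering the sharp $\sqrt{\eta}$ rate requires genuinely exploiting the near-Gaussianity of the appended tail $Y$ conditionally on $\mcl{F}_n$ together with the smoothness of the law of $S(\td{\mbx})/\sigma$, rather than merely the second moment of $Y$.
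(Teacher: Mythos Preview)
Your approach differs substantially from the one the paper follows (which is Bolthausen's). There, no free parameter $\eta$ is introduced, the target variance is not inflated to $(1+\eta)\sigma^2$, and there is no case split on $A$ versus $A^c$. Instead one builds a single augmented sequence $\hat{\mbx}\in M_{2n}$ whose conditional quadratic variation equals $s^2(\mbx)$ almost surely: set $\tau=\sup\{k\le n:\sum_{i\le k}\E[X_i^2\mid\mcl{F}_{i-1}]\le s^2(\mbx)\}$, keep $\hat X_i=X_i$ for $i\le\tau$, and append independent $\pm\gamma$ Rademacher increments (plus one smaller one) to fill the remaining variance. This treats $V^2\le1$ and $V^2>1$ uniformly and forces $s(\hat{\mbx})=s(\mbx)$, so no rescaling term appears. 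After applying Theorem~\ref{t:bo} to $\hat{\mbx}$, the \emph{naive} shift--Chebyshev step
\[
\P\bigl[S(\mbx)/s\le t\bigr]\ \le\ \P\bigl[S(\hat{\mbx})/s\le t+x\bigr]+x^{-2}\,\E\bigl[(S(\mbx)-S(\hat{\mbx}))^2\bigr]\big/s^2,
\]
combined with the elementary bound $\E[(S(\mbx)-S(\hat{\mbx}))^2]\le s^2\,\E|V^2-1|+O(\gamma^2)$, already delivers $\|V^2-1\|_1^{1/3}$ upon optimizing in $x$. The $L^\infty$ half uses the same construction together with the almost-sure bound on the bracket of $S-\hat S$.

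By inflating the target to $\Sigma^2$ and splitting on $A$, you manufacture an obstacle that the paper never faces. As you acknowledge, shift--Chebyshev on your third term yields only $\eta^{1/3}$, which after balancing against $\P[A^c]\le\|V^2-1\|_1/\eta$ would produce the wrong exponent $\|V^2-1\|_1^{1/4}$; so everything hinges on the claimed $\sqrt\eta$ bound. But the sketch you give for it is not convincing: the Berry--Esseen error for $Y$ is of order $\gamma/\sqrt R$, which blows up precisely when $R$ is small, and the ``convolution-type'' step would need smoothness of the law of $S(\mbx)$ itself (not of $S(\td{\mbx})$), which is exactly the unknown quantity. There is also a smaller slip: on your event $A=\{V^2\le1+\eta\}$ the conditional variance of $Y/\sigma$ equals $1+\eta-V^2$, which is not bounded by $\eta$ unless you also impose $V^2\ge 1-\eta$. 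The paper's construction, by taking $s^2(\mbx)$ as the target and using the stopping time $\tau$, sidesteps all of this and makes the simple second-moment argument sufficient.
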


See~\cite{joos}, Theorem~3.2, for a nonuniform version of this result.
A strategy was sketched by~\cite{bo} to prove that the bound $\|
V^2(\mbx) - 1 \|_1^{1/3}$ is indeed optimal, even on the restricted
class considered by Corollary~\ref{cor:bo0} of martingales with
bounded increments. This example provides a satisfactory answer to our
question of optimality for $p = 1$. The aim of the present paper is to
generalize Corollary~\ref{cor:bo0} and the optimality result to any $p
\in[1,+\infty)$. We begin by proving the following general result.
\begin{them}
\label{cor:bo}
Let $p \in[1,+\infty)$ and $\gamma\in(0,+\infty)$. There exists a
constant $C_{p,\gamma} > 0$ such that for any $n \ge2$ and any $\mbx
\in M_n$ satisfying $\|\mbx\|_{\infty} \le\gamma$,
%
\begin{equation}
\label{eq:bo}
D(\mbx) \le C_{p,\gamma} \biggl[ \frac{n \log(n)}{s^3(\mbx)} + \bigl({\|
V^2(\mbx)-1\|}_p^p + s^{-2p}(\mbx) \bigr)^{1/(2p+1)} \biggr].
\end{equation}
\end{them}

Note that, somewhat surprisingly, the term $s^{-2p}(\mbx) \sum_{i =
1}^n {\|X_i\|}_{2p}^{2p}$ appearing in inequality (\ref{eq:ha}) is no
longer present in (\ref{eq:bo0}), and is changed for the smaller
$s^{-2p}(\mbx)$ in (\ref{eq:bo}).

Finally, we justify the optimality of the term ${\|V^2(\mbx)-1\|
}_p^{p/(2p+1)}$ appearing in the right-hand side of (\ref{eq:bo}).
\begin{them}
\label{t:example}
Let $p \in[1,+\infty)$ and $\alpha\in(1/2,1)$. There exists a
sequence of elements $\mbx_n \in M_n$ such that
\begin{itemize}
\item $ \|\mbx_n\|_\infty\le2$,
\item $ s(\mbx_n) \simeq\sqrt{n}$,
\item $ {\|V^2(\mbx_n) - 1\|}_p^{p/(2p+1)} = \mathrm{O} ( n^{(\alpha- 1)/2} )$,
\item $ \limsup_{n \to+ \infty} n^{(1-\alpha)/2} D(\mbx_n) > 0$.
\end{itemize}
\end{them}

Our strategy for proving Theorem~\ref{t:example} builds on the
approach sketched by~\cite{bo} for the case where $p = 1$.
Interestingly, Theorem~\ref{cor:bo} is used in the proof of
Theorem~\ref{t:example}.

The question of optimality of the term ${\|V^2(\mbx)-1\|
}_p^{p/(2p+1)}$, now settled by Theorem~\ref{t:example}, arises
naturally in the problem of showing a quantitative central limit
theorem for the random walk among random conductances on $\Z^d$ \cite
{berry}. There, the random walk is approximated by a martingale. The
martingale increments are stationary and almost bounded for $d \ge3$,
in the sense that they have bounded $L^p$ norm for every $p < + \infty
$. Roughly speaking, for $d \ge3$, the variance of the rescaled
quadratic variation up to time $t$ decays as $t^{-1}$. This bound is
optimal and leads to a Berry--Esseen bound of order $t^{-1/5}$. Thus
Theorem~\ref{t:example} demonstrates that a better exponent of decay
than $1/5$ cannot be obtained when relying solely on information about
the variance of the quadratic variation.\looseness=-1

Theorem~\ref{cor:bo} is proved in Section~\ref{s:proofcor}, and
Theorem~\ref{t:example} is proved in Section~\ref{s:example}.
%
%
%
\section{\texorpdfstring{Proof of Theorem~\protect\ref{cor:bo}}{Proof of Theorem 1.5}}\label{s:proofcor}
The proof of Theorem~\ref{cor:bo} is essentially similar to the proof
of Corollary~\ref{cor:bo0} given by~\cite{bo}, with the additional
ingredient of a Burkholder inequality. Let $\mbx= (X_1,\ldots, X_n)
\in M_n$ be such that $\|\mbx\|_\infty\le\gamma$. The idea
(probably first suggested by~\cite{dvo}) is to augment\vadjust{\goodbreak} the sequence to
some $\hat{\mbx} \in M_{2n}$ such that $V^2(\hat{\mbx}) = 1$ a.s.,
while preserving the property that $\|\hat{\mbx}\|_\infty\le\gamma
$, and apply Theorem~\ref{t:bo} to this enlarged sequence. Let
\[
\tau= \sup \Biggl\{ k \le n \dvt \sum_{i=1}^k \E[X_i^2 \vert \mcl{F}_{i-1}] \le
s^2(\mbx) \Biggr\}.
\]
For $i \le\tau$, we define $\hat{X}_i = X_i$. Let $r$ be the largest
integer not exceeding
\[
\frac{s^2(\mbx) - \sum_{i = 1}^\tau\E[X_i^2 \vert \mcl
{F}_{i-1}]}{\gamma^2}.
\]
As $\|X\|_\infty\le\gamma$, clearly $r \le n$. Conditional on $\mcl
{F}_\tau$ and for $1 \le i \le r$, we let $\hat{X}_{i}$ be
independent random variables such that
$\P[\hat{X}_{\tau+ i} = \pm\gamma] = 1/2$. If $\tau+ r < 2n$,
then we let $\hat{X}_{\tau+ r + 1}$ be such that
\[
\P\Biggl[ \hat{X}_{\tau+ r + 1} = \pm \Biggl(s^2(\mbx) - \sum_{i = 1}^\tau\E
[X_i^2 \vert \mcl{F}_{i-1}] - r \gamma^2 \Biggr)^{1/2}\Biggr ] = \frac{1}{2},
\]
with the sign determined independent of everything else.
Finally, if $\tau+ r + 1 < 2n$, then we let $\hat{X}_{\tau+ r + i} =
0$ for $i \ge2$.

Possibly enlarging the $\sigma$-fields, we can assume that $\hat
{X}_i$ is $\mcl{F}_i$-measurable for $i \le n$, and define $\mcl
{F}_i$ to be the $\sigma$-field generated by $\mcl{F}_n$ and $\hat
{X}_{n+1},\ldots, \hat{X}_{n+i}$ if $i > n$. By construction, we have
\[
\sum_{i = \tau+ 1}^{2n} \E[\hat{X}_i^2 \vert \mcl{F}_{i-1}] = s^2(\mbx
) - \sum_{i = 1}^\tau\E[X_i^2 \vert \mcl{F}_{i-1}],
\]
which can be rewritten as
\[
\sum_{i = 1}^{2n} \E[\hat{X}_i^2 \vert \mcl{F}_{i-1}] = s^2(\mbx).
\]
Consequently, $s^2(\hat{\mbx}) = s^2(\mbx)$ and $V^2(\hat{\mbx}) =
1$ a.s. The sequence $\hat{\mbx}$ thus satisfies the assumptions of
Theorem~\ref{t:bo}, so
%
\begin{equation}
\label{Dhatx}
D(\hat{\mbx}) \le4 C_\gamma\frac{n \log(n)}{s^3(\mbx)}.
\end{equation}
For any $x > 0$, we have
%
\begin{eqnarray}\label{ineqxhatx}
\P\biggl[\frac{S(\mbx)}{s(\mbx)} \le t \biggr] & \le& \P\biggl[ \frac{S(\mbx
)}{s(\mbx)} \le t, \frac{|S(\mbx) - S(\hat{\mbx})|}{s(\mbx)} \le
x \biggr] + \P\biggl[ \frac{|S(\mbx) - S(\hat{\mbx})|}{s(\mbx)} \ge x \biggr]
\nonumber
\\[-8pt]
\\[-8pt]
& \le& \P\biggl[ \frac{S(\hat{\mbx})}{s(\mbx)} \le t+x\biggr ] + \frac
{1}{x^{2p}} \E \biggl[\biggl | \frac{S(\mbx) - S(\hat{\mbx})}{s(\mbx)} \biggr|^{2p} \biggr].\nonumber
\end{eqnarray}
Due to (\ref{Dhatx}), the first term in the right-hand side of (\ref
{ineqxhatx}) is smaller than
%
\begin{equation}
\label{boundPhi}
\Phi(t+x) + 4 C_\gamma\frac{n \log(n)}{s^3(\mbx)} \le\Phi(t) +
\frac{x}{\sqrt{2\uppi}} +4 C_\gamma\frac{n \log(n)}{s^3(\mbx)}.
\end{equation}
To control the second term, first note that
%
\begin{equation}
\label{SShat}
S(\mbx) - S(\hat{\mbx}) = \sum_{i = \tau+ 1}^{2n} (X_i - \hat{X}_i),
\end{equation}
where we put $X_i = 0$ for $i > n$. Given that $\tau+ 1$ is a stopping
time, conditional on $\tau$, the $(X_i - \hat{X}_i)_{i \ge\tau+ 2}$
still forms a martingale difference sequence. Thus we can use
Burkholder's inequality (see, e.g.,~\cite{hh}, Theorem 2.11), which
states that
%
\begin{eqnarray}
\label{multXXhat}
&&\frac{1}{C} \E\Biggl[ \Biggl| \sum_{i = \tau+ 2}^{2n} (X_i - \hat{X}_i) \Biggr|^{2p}
\Biggr] \nonumber
\\[-8pt]
\\[-8pt]
&&\quad\le\E \Biggl[ \Biggl( \sum_{i = \tau+ 2}^{2n} \E[(X_i - \hat{X}_i)^2 \vert \mcl
{F}_{i-1}]\Biggr )^p \Biggr] + \E \Bigl[ \max_{\tau+ 2 \le i \le2n} | X_i - \hat
{X}_i |^{2p} \Bigr],\nonumber
\end{eqnarray}
and we can safely discard the summand indexed by $\tau+1$ appearing in
(\ref{SShat}), which is uniformly bounded. The maximum on the
right-hand side of (\ref{multXXhat}) is also bounded by $2\gamma
^{2p}$. As for the other term, with $X_i$ and $\hat{X}_i$ as
orthogonal random variables, we have
%
\begin{eqnarray}
\label{eqnXXhat}
\sum_{i = \tau+ 1}^{2n} \E[(X_i - \hat{X}_i)^2 \vert \mcl{F}_{i-1}]
& = & \sum_{i = \tau+ 1}^{2n} \E[X_i^2 \vert \mcl{F}_{i-1}] + \sum_{i
= \tau+ 1}^{2n} \E[\hat{X}_i^2 \vert \mcl{F}_{i-1}] \nonumber
\\[-8pt]
\\[-8pt]
& = & s^2(\mbx) V^2(\mbx) + s^2(\mbx) - 2 \underbrace{\sum_{i =
1}^\tau\E[X_i^2 \vert \mcl{F}_{i-1}]}.\nonumber
\end{eqnarray}
Now, if $\tau= n$, then by definition the sum underbraced above is
$s^2(\mbx) V^2(\mbx)$. Otherwise, $\sum_{i = 1}^{\tau+1} \E[X_i^2
\vert \mcl{F}_{i-1}]$ exceeds $s^2(\mbx)$, but as the increments are
bounded, the sum underbraced is necessarily larger than $s^2(\mbx) -
\gamma^2$. In any case, we thus have
\[
\sum_{i = 1}^\tau\E[X_i^2 \vert \mcl{F}_{i-1}] \ge\min\bigl(s^2(\mbx
)V^2(\mbx), s^2(\mbx) - \gamma^2\bigr).
\]
Consequently, from (\ref{eqnXXhat}), we obtain that
\[
\sum_{i = \tau+ 1}^{2n} \E[(X_i - \hat{X}_i)^2 \vert \mcl{F}_{i-1}]
\le | s^2(\mbx) V^2(\mbx) - s^2(\mbx) | + 2 \gamma^2.
\]
Combining this with equations (\ref{multXXhat}), (\ref{SShat}), (\ref
{boundPhi}) and (\ref{ineqxhatx}), we finally obtain that
\[
\P\biggl[\frac{S(\mbx)}{s(\mbx)} \le t \biggr] - \Phi(t) \le4 C_\gamma\frac
{n \log(n)}{s^3(\mbx)} + \frac{x}{\sqrt{2\uppi}} + \frac{C}{x^{2p}}
\biggl( \|V^2(\mbx) - 1\|_p^p + \frac{\gamma^{2p}}{s^{2p}(\mbx)} \biggr).
\]
Optimizing this over $x > 0$ leads to the correct estimate. The lower
bound is obtained in the same way.
%
%
\section{\texorpdfstring{Proof of Theorem~\protect\ref{t:example}}{Proof of Theorem 1.6}}
\label{s:example}
Let $p \ge1$ and $\alpha\in(1/2,1)$ be fixed. We let $(X_{n i})_{1
\le i \le n - n^\alpha}$ be independent random variables with $\P
[X_{ni} = \pm1] = 1/2$. The subsequent $(X_{n i})_{n - n^\alpha< i
\le n}$ are defined recursively. Let
\[
\lambda_{n i} = \sqrt{n - i + \kappa_n^2},
\]
where $\kappa_n = n^{1/4}$ (in fact, any $n^\beta$ with $1-\alpha<
2\beta< \alpha$ would be fine).
Assuming that $X_{n,1},\ldots,X_{n,i-1}$ have been defined, we write
$\mathcal{F}_{n,i-1}$ for the $\sigma$-algebra that they generate,
and let
\[
S_{n, i-1} = \sum_{j = 1}^{i-1} X_{nj}.
\]
For any $i$ such that $n - n^\alpha< i \le n$, we construct $X_{n i}$
such that
%
\begin{equation}
\label{defXni}
\P[X_{n i} \in\cdot \vert \mathcal{F}_{n,i-1}] =
\left|
\begin{array}{l@{\qquad}l}
\delta_{-\sqrt{3/2}} + \delta_{\sqrt{3/2}} & \mbox{if } S_{n,i-1}
\in[\lambda_{n i}, 2 \lambda_{n i} ] , \\
\delta_{-\sqrt{1/2}}+ \delta_{\sqrt{1/2}} & \mbox{if } S_{n,i-1}
\in[-2 \lambda_{n i}, - \lambda_{n i} ], \\
\delta_{-1} + \delta_{1} & \mbox{otherwise},
\end{array} \right.
\end{equation}
where $\delta_x$ is the Dirac mass at point $x$. Here $(S_{ni})_{i \le
n}$ can be viewed as an inhomogeneous Markov chain. We write $\mbx_n =
(X_{n1},\ldots,X_{nn})$ and $\mbx_{ni} = (X_{n1},\ldots,X_{ni})$ for
any $i \le n$. Let
%
\begin{equation}
\label{defdeltan}
\delta(i) = \sup_{n \ge i} D(\mbx_{ni}).
\end{equation}

\begin{prop}
\label{p:deltaO}
Uniformly over $n$,
%
\begin{equation}
\label{V2}
\|V^2(\mbx_{ni}) - 1\|_p = \mathrm{O}\bigl ( i^{(\alpha- 1)(1+1/2p)}\bigr )\qquad (i \to+
\infty)
\end{equation}
and
%
\begin{equation}
\label{deltaO}
\delta(i) = \mathrm{O} \bigl(i^{(\alpha-1)/2}\bigr )\qquad (i \to+ \infty).
\end{equation}
\end{prop}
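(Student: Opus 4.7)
The two estimates (\ref{V2}) and (\ref{deltaO}) feed into one another, so my plan is to prove them together by strong induction on $i$. The forward implication at step $i$ is Theorem~\ref{cor:bo} itself: once (\ref{V2}) holds at index $i$, plugging it into the right-hand side of (\ref{eq:bo}) yields (\ref{deltaO}) at index $i$. The backward implication at step $i$ uses that, by construction (\ref{defXni}), the conditional variance $\E[X_{nj}^2 \mid \mcl{F}_{n,j-1}]$ depends only on $S_{n,j-1}$, so quantitative information on that distribution --- exactly what $\delta(j-1)$ supplies for $j \le i$ --- is what converts the computation of $\|V^2 - 1\|_p$ into a sharp estimate. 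A base case is free because $\delta(i) \le 1$ always and $V^2(\mbx_{ni}) = 1$ whenever $i \le n - n^\alpha$.

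For (\ref{V2}) at step $i > n - n^\alpha$, the symmetry of the modified law in (\ref{defXni}), combined with the symmetry of the distribution of $S_{n,j-1}$, forces $\E[X_{nj}^2] = 1$ for every $j$, hence $s^2(\mbx_{ni}) = i$ and
\[
V^2(\mbx_{ni}) - 1 = \frac{1}{i} \sum_{j = n - n^\alpha + 1}^i Y_j, \qquad Y_j := \E[X_{nj}^2 \mid \mcl{F}_{n,j-1}] - 1.
\]
Each $Y_j$ takes values in $\{-\tfrac12, 0, \tfrac12\}$ and is supported on $A_j := \{|S_{n,j-1}| \in [\lambda_{nj}, 2\lambda_{nj}]\}$, so $\|Y_j\|_p = \tfrac12 \, \P(A_j)^{1/p}$. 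The inductive hypothesis on $\delta(j-1)$, together with the mean value theorem applied to $\Phi$ and the identity $s(\mbx_{n,j-1}) = \sqrt{j-1}$, gives
\[
\P(A_j) \le C \ll( \frac{\lambda_{nj}}{\sqrt{j-1}} + \delta(j-1) \rr) \le C \ll( \sqrt{\frac{n - j + \kappa_n^2}{j - 1}} + (j-1)^{(\alpha-1)/2} \rr).
\]
Using $(a + b)^{1/p} \le a^{1/p} + b^{1/p}$ (valid for $p \ge 1$) and Minkowski's inequality, matters reduce to estimating $\sum_{n - n^\alpha < j \le i} (n-j+\kappa_n^2)^{1/(2p)}$ by an integral, yielding $n^{\alpha(1+1/(2p))}$, together with $\sum_j 1 \le n^\alpha$ for the $\delta(j-1)$ contribution. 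The total is $\sum_j \|Y_j\|_p \le C n^{\alpha + (\alpha-1)/(2p)}$; dividing by $i \ge n/2$ and noting that $i \le n$ combined with the negativity of the exponent $(\alpha-1)(1+1/(2p))$ finally yields (\ref{V2}) in terms of $i$.

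For (\ref{deltaO}) at step $i$, note that $\|\mbx_{ni}\|_\infty \le \sqrt{3/2} < 2$, so Theorem~\ref{cor:bo} applied with $\gamma = 2$ delivers
\[
D(\mbx_{ni}) \le C \ll( \frac{\log i}{\sqrt i} + \big( \|V^2(\mbx_{ni}) - 1\|_p^p + i^{-p} \big)^{1/(2p+1)} \rr).
\]
Inserting the bound (\ref{V2}) just obtained at step $i$, one has $\|V^2 - 1\|_p^p = O(i^{(\alpha-1)(p+1/2)})$ and raising to the power $1/(2p+1)$ gives $O(i^{(\alpha-1)/2})$ since $(p+1/2)/(2p+1) = 1/2$. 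The remainder $i^{-p/(2p+1)}$ is also $O(i^{(\alpha-1)/2})$ because $p/(2p+1) \ge 1/3 > (1-\alpha)/2$ whenever $\alpha > 1/2$, and the same inequality absorbs $\log(i)/\sqrt i$ into $i^{(\alpha-1)/2}$.

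The main obstacle I anticipate is the uniformity in $n$: $\delta(i)$ is a supremum over all $n \ge i$, so the inductive loop must produce constants that depend only on $p$ and $\alpha$. This means invoking Theorem~\ref{cor:bo} with the $n$-free constant $C_{p,2}$, and using the inductive bound on $\delta(j-1)$ as an $n$-independent upper bound on $D(\mbx_{n, j-1})$ when estimating $\P(A_j)$. Both requirements are met because the inductive statements are themselves phrased uniformly in $n$, but this is the one detail that warrants close attention when writing the proof in full.
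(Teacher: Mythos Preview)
Your overall strategy---bootstrapping via Theorem~\ref{cor:bo}---is exactly the paper's. However, there is a genuine error: you claim that ``the symmetry of the distribution of $S_{n,j-1}$'' forces $\E[X_{nj}^2]=1$, hence $s^2(\mbx_{ni})=i$. This is false once $j-1>n-n^\alpha$. The conditional law of $X_{nj}$ given $S_{n,j-1}$ breaks the symmetry: if $S_{n,j-1}\in I^+_{nj}$ the increment has variance $3/2$, while if $S_{n,j-1}\in I^-_{nj}=-I^+_{nj}$ it has variance $1/2$. So although the conditional law of $X_{nj}$ is symmetric about zero for each fixed value of $S_{n,j-1}$, the map $(s,x)\mapsto(-s,-x)$ does not preserve the joint law of $(S_{n,j-1},X_{nj})$, and the symmetry of $S_{n,\cdot}$ is lost after the first modified step.

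The fix is not hard and is carried out in the paper: one shows $|\E[X_{nj}^2]-1|\le 2\delta(j-1)$ by comparing $\P[S_{n,j-1}\in I^\pm_{nj}]$ to the Gaussian approximation (which \emph{is} symmetric). This yields $|s^2(\mbx_{ni})-i|\le C i^{(3\alpha-1)/2}K_i$ where $K_i=\max_{j\le i}\delta(j)\,j^{(1-\alpha)/2}$, a lower-order correction that feeds back into the bootstrap harmlessly. But it must be tracked: your displayed formula for $V^2(\mbx_{ni})-1$ is incorrect as written, and so is the identity $s(\mbx_{n,j-1})=\sqrt{j-1}$ that you invoke when bounding $\P(A_j)$.

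As a side remark, the paper organizes the bootstrap more cleanly than via strong induction on $i$: it derives a single self-referencing inequality $K_i\le C(1+K_i)^{1/(2p+1)}+C K_i^{p/(2p+1)}$ and observes that the set $\{x\ge 0: x\le C(1+x)^{1/(2p+1)}+Cx^{p/(2p+1)}\}$ is bounded. This sidesteps having to choose the inductive constant in advance---precisely the ``one detail'' you flagged at the end.
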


The proof goes as follows. First, we bound $\|V^2(\mbx_{ni}) - 1\|_p$
in terms of $(\delta(j))_{j \le i}$ in Lemma~\ref{l:deltaO}. This
gives an inequality on the sequence $(\delta(i))_{i \in\N}$ through
Theorem~\ref{cor:bo}, from which we deduce (\ref{deltaO}), and then
(\ref{V2}).
\begin{lem}
\label{l:deltaO}
Let $K_i = \max_{j \le i} \delta(j) j^{(1-\alpha)/2}$.
For any $n$ and $i$, the following inequalities hold:
%
\begin{eqnarray}
\label{square}
| \E[X_{ni}^2] - 1 | &\le&
\left|
\begin{array}{l@{\qquad}l}
0 & \mbox{if } i \le n - n^\alpha, \\
2 \delta(i-1) & \mbox{if } n - n^\alpha< i \le n,
\end{array}\right. \\
%
%
\label{s2}
| s^2(\mbx_{ni}) - i | &\le&
\left|
\begin{array}{l@{\qquad}l}
0 & \mbox{if } i \le n - n^\alpha, \\
C i^{(3\alpha- 1)/2} K_i \le C i^\alpha& \mbox{if } n - n^\alpha<
i \le n,
\end{array}\right.   \\
%
%
\label{V2delta}
\|V^2(\mbx_{ni}) - 1\|_p &\le&
\left|
\begin{array}{l@{\qquad}l}
0 & \mbox{if } i \le n - n^\alpha, \\
C i^{(\alpha- 1)(1+1/2p)} (1 + K_i)^{1/p} + C i^{(3\alpha- 3)/2} K_i
& \mbox{otherwise}.
\end{array} \right.    \qquad
\end{eqnarray}
\end{lem}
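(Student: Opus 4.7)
My plan is to establish the three inequalities in sequence, using the common notation $Y_j := \E[X_{nj}^2\mid\mcl{F}_{n,j-1}]-1$, which by (\ref{defXni}) vanishes for $j\le n-n^\alpha$ and for $j>n-n^\alpha$ equals $\tfrac12\1_{S_{n,j-1}\in[\lambda_{nj},2\lambda_{nj}]}-\tfrac12\1_{S_{n,j-1}\in[-2\lambda_{nj},-\lambda_{nj}]}$. For $i\le n-n^\alpha$ the variables $X_{nj}$ are i.i.d.\ Rademacher, so $V^2(\mbx_{ni})\equiv 1$ almost surely and the three bounds hold trivially. For (\ref{square}) with $n-n^\alpha<i\le n$, I would observe that $\E[X_{ni}^2]-1=\E[Y_i]$ is half the difference between $\P[S_{n,i-1}\in[\lambda_{ni},2\lambda_{ni}]]$ and $\P[S_{n,i-1}\in[-2\lambda_{ni},-\lambda_{ni}]]$; the definition of $\delta(i-1)$ controls each of these probabilities by its Gaussian analogue up to a multiple of $\delta(i-1)$, and the two Gaussian masses are equal by the symmetry $\Phi(b)-\Phi(a)=\Phi(-a)-\Phi(-b)$, so $|\E[Y_i]|\le 2\delta(i-1)$.

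Inequality (\ref{s2}) would then follow by summing (\ref{square}): the contributions to $s^2(\mbx_{ni})-i$ come from $n-n^\alpha<j\le i$, each bounded by $2\delta(j-1)\le 2K_i(j-1)^{-(1-\alpha)/2}$, and since there are at most $n^\alpha$ such terms with $j-1\simeq n$, their sum is at most $C i^{(3\alpha-1)/2}K_i$; the weaker $Ci^\alpha$ follows since $K_i$ will be shown to remain bounded. For (\ref{V2delta}), I would start from
\[
V^2(\mbx_{ni})-1=s^{-2}(\mbx_{ni})\sum_{j=n-n^\alpha+1}^{i}(Y_j-\E[Y_j])
\]
and split by the triangle inequality. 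The deterministic piece $s^{-2}(\mbx_{ni})\bigl|\sum_j\E[Y_j]\bigr|$ is estimated exactly as in (\ref{s2}) and, after division by $s^2(\mbx_{ni})\simeq i$, produces the second summand $C i^{(3\alpha-3)/2}K_i$. For the random piece, $|Y_j|\le\tfrac12\1_{A_j}$ with $A_j:=[\lambda_{nj},2\lambda_{nj}]\cup[-2\lambda_{nj},-\lambda_{nj}]$, and I would bound $\P[S_{n,j-1}\in A_j]$ by combining the Gaussian estimate $C\lambda_{nj}/s(\mbx_{n,j-1})\le Cn^{(\alpha-1)/2}$ (using $\kappa_n^2=\sqrt{n}\le n^\alpha$ and (\ref{s2})) with the Berry--Esseen error $O(\delta(j-1))\le CK_i n^{(\alpha-1)/2}$, giving $\|Y_j\|_p\le C(1+K_i)^{1/p}n^{(\alpha-1)/(2p)}$; summing over the at most $n^\alpha$ relevant indices and dividing by $s^2(\mbx_{ni})\simeq i$ yields the first summand.

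The delicate point, and the reason for the two-term structure in (\ref{V2delta}), is extracting from the Kolmogorov-type bound $\delta(j-1)$ an estimate on $\P[S_{n,j-1}\in A_j]$: this probability cannot be controlled by $\delta(j-1)$ alone and must instead be bounded by the Gaussian mass of $A_j$ (of order $\lambda_{nj}/s$) plus a Berry--Esseen error. The first summand in (\ref{V2delta}) then emerges from the Gaussian contribution raised to the power $1/p$ inside the $L^p$ norm, whereas the second emerges from applying (\ref{square}) to the deterministic drift $\E[Y_j]$, which is of size $\delta(j-1)$ rather than $\delta(j-1)^{1/p}$.
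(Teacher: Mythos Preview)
Your argument is correct and follows the paper's proof almost verbatim: the paper derives (\ref{square}) from the symmetry of $\Phi$ exactly as you do, obtains (\ref{s2}) by summing, and for (\ref{V2delta}) writes $\sum_j \E[X_{nj}^2\mid\mcl{F}_{n,j-1}]-s^2(\mbx_{ni}) = \sum_j Y_j + (i-s^2(\mbx_{ni}))$ and bounds each $\|Y_j\|_p$ by $\tfrac12\,\P[S_{n,j-1}\in I^+_{nj}\cup I^-_{nj}]^{1/p}$ just as you outline. Your centred identity $V^2-1=s^{-2}\sum_j(Y_j-\E[Y_j])$ is simply a rewriting of this, since $\sum_j\E[Y_j]=s^2(\mbx_{ni})-i$.

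One small point deserves correction. Your justification of the second inequality in (\ref{s2}), namely $Ci^{(3\alpha-1)/2}K_i\le Ci^\alpha$, via ``$K_i$ will be shown to remain bounded'' is circular: the boundedness of $K_i$ is established in Proposition~\ref{p:deltaO} \emph{using} the present lemma. (It would also give the sharper $Ci^{(3\alpha-1)/2}$, not $Ci^\alpha$.) The intended non-circular argument is the trivial one: $\delta(j)\le 1$ always, so $K_i\le i^{(1-\alpha)/2}$ and hence $i^{(3\alpha-1)/2}K_i\le i^\alpha$. This matters, because the bound $|s^2(\mbx_{ni})-i|\le Ci^\alpha=o(i)$ is precisely what furnishes the estimate $s^2(\mbx_{ni})\simeq i$ that you (and the paper) invoke when dividing by $s^2(\mbx_{ni})$ in the proof of (\ref{V2delta}).
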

\begin{pf}
Inequality (\ref{square}) is obvious for $i \le n - n^\alpha$.
Otherwise, from the definition (\ref{defXni}), we know that
\[
\E[X_{ni}^2] = 1 + \tfrac{1}{2} \P[S_{n,i-1} \in I^+_{ni}] - \tfrac
{1}{2} \P[S_{n,i-1} \in I^-_{ni}],
\]
where we write
%
\begin{equation}
\label{defI}
I^+_{ni} = [\lambda_{ni},2\lambda_{ni}]\quad \mbox{and}\quad I^-_{ni} =
[-2\lambda_{ni}, -\lambda_{ni}].
\end{equation}
The random variable $S_{n,i-1}/s(\mbx_{n,i-1})$ is approximately
Gaussian, up to an error controlled by $\delta(i-1)$. More precisely,
\[
\biggl| \P[S_{n,i-1} \in I^+_{ni}] - \int_{I^+_{ni}/s(\mbx_{n,i-1})} \d
\Phi \biggr| \le2 \delta(i-1) .
\]
We obtain (\ref{square}) using the fact that
\[
\int_{I^+_{ni}/s(\mbx_{n,i-1})} \d\Phi= \int_{I^-_{ni}/s(\mbx
_{n,i-1})} \d\Phi.
\]
As a by-product, we also learn that
\[
| s^2(\mbx_{ni}) - i | \le
\left|
\begin{array}{l@{\qquad}l}
0 & \mbox{if } i \le n - n^\alpha, \\
2 \displaystyle\sum_{n - n^\alpha< j \le i} \delta(j-1) & \mbox{if } n -
n^\alpha< i \le n.
\end{array} \right.
\]
Recalling that $\alpha < 1$, we obtain (\ref{s2}), noting that for $n
- n^\alpha< i \le n$,
\[
\sum_{n - n^\alpha< j \le i} \delta(j-1) \le n^\alpha(n - n^\alpha
)^{(\alpha- 1)/2} K_i.
\]
In particular, it follows that
%
\begin{equation}
\label{s2isi}
s^2(\mbx_{ni}) = i\bigl(1+\mathrm{o}(1)\bigr).
\end{equation}
Turning now to (\ref{V2delta}), $\|V^2(\mbx_{ni}) - 1 \|_p$ is
clearly equal to $0$ for $i \le n - n^\alpha$, so let us assume the
contrary. We have:
%
\begin{eqnarray}
\label{eq1}
\|V^2(\mbx_{ni}) - 1 \|_p & = & s^{-2}(\mbx_{ni}) \Biggl\| \sum_{j = 1}^i
\E[X_{nj}^2 | \mcl{F}_{n,j-1}] - s^2(\mbx_{ni}) \Biggr\|_p \nonumber\\
& \le & \frac{1}{s^{2}(\mbx_{ni})} \sum_{j = 1}^i \| \E[X_{nj}^2 |
\mcl{F}_{n,j-1}] - 1 \|_p + \frac{ | s^2(\mbx_{ni}) - i
|}{s^{2}(\mbx_{ni}) } \\
& \le & \frac{1}{2 s^{2}(\mbx_{ni})} \sum_{n - n^\alpha< j \le i}
(\P[S_{n,j-1} \in I^+_{nj} \cup I^-_{nj}] )^{1/p} + \frac{ | s^2(\mbx
_{ni}) - i |}{s^{2}(\mbx_{ni}) }.\nonumber
\end{eqnarray}
We consider the two terms in (\ref{eq1}) separately. First, by the
definition of $\delta$, we know that
\[
\biggl| \P[S_{n,j-1} \in I^+_{nj} \cup I^-_{nj}] - \int_{(I^+_{nj} \cup
I^-_{nj})/s(\mbx_{n,j-1})} \d\Phi \biggr| \le2 \delta(j-1).
\]
Equation (\ref{s2isi}) implies that, uniformly over $j > n - n^\alpha$,
\[
\int_{(I^+_{nj} \cup I^-_{nj})/s(\mbx_{n,j-1})} \d\Phi= (2\uppi
)^{-1/2} \frac{2 \lambda_{nj}}{s(\mbx_{n,j-1})}\bigl (1 + \mathrm{o}(1)\bigr) \le C
n^{(\alpha- 1)/2},
\]
and so the first term of (\ref{eq1}) is bounded by
%
%
\begin{eqnarray}\label{bound1}
&& \frac{C}{i} \sum_{n - n^\alpha< j \le i} \bigl(n^{(\alpha- 1)/2} + 2
\delta(j-1)\bigr)^{1/p} \nonumber\\
&&\quad\le \frac{C}{i} \sum_{n - n^\alpha< j \le i} \bigl(n^{(\alpha- 1)/2}
+ 2 (n - n^\alpha)^{(\alpha- 1)/2} K_i\bigr)^{1/p} \\
&&\quad \le C i^{(\alpha- 1)(1+1/2p)} (1 + K_i)^{1/p}.\nonumber
\end{eqnarray}
The second term in (\ref{eq1}) is controlled by (\ref{s2}), and we
obtain inequality (\ref{V2delta}).
\end{pf}
\begin{pf*}{Proof of Proposition~\ref{p:deltaO}}
Applying Theorem~\ref{cor:bo} with the information given by Lemma~\ref
{l:deltaO}, we obtain that, up to a multiplicative constant that does
not depend on $n$ and $i \le n$, $D(\mbx_{ni})$ is bounded by
%
\begin{equation}
\label{eq2}
\frac{\log(i)}{\sqrt{i}} + i^{(\alpha- 1)/2} (1 + K_i)^{1/(2p+1)} +
i^{- 3(1 - \alpha)p/(4p+2)} K_i^{p/(2p+1)} + i^{-p/(2p+1)}.
\end{equation}
The first term can be disregarded, because it is dominated by
$i^{-p/(2p+1)}$. Also note that as $p \ge1$, we have
\[
\frac{3(1 - \alpha)p}{4p+2} \ge \frac{1-\alpha}{2},
\]
and as $\alpha> 1/2 > 1/(2p+1)$, we also have
\[
\frac{p}{2p+1} \ge\frac{1-\alpha}{2}.
\]
Multiplying (\ref{eq2}) by $i^{(1-\alpha)/2}$, we thus obtain
\[
K_i \le C (1 + K_i)^{1/(2p+1)} + C K_i^{p/(2p+1)},
\]
where we recall that the constant $C$ does not depend on $i$. Observing
that the set $\{x \ge0 \dvt x \le C (1+x)^{1/(2p+1)} + C x^{p/(2p+1)} \}$
is bounded, we obtain that $K_i$ is a bounded sequence, so (\ref
{deltaO}) is proved. The relation (\ref{V2}) then follows from (\ref
{deltaO}) and (\ref{V2delta}).
\end{pf*}
\begin{prop}
\label{p:ex2}
We have
\[
\limsup_{i \to+ \infty} i^{(1-\alpha)/2} \delta(i) > 0.
\]
\end{prop}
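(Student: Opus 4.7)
The plan is to exhibit a smooth odd test function $\chi$ such that
$$\Delta_n := \E[\chi(S(\mbx_n)/\sqrt n)] - \int \chi\,\d\Phi$$
satisfies $|\Delta_n| \ge c\, n^{(\alpha-1)/2}$ modulo errors of strictly lower order plus a multiple of $\delta(n)$, while $\|\chi'\|_1$ stays bounded in $n$. Integration by parts together with (\ref{s2}) will give the opposite inequality $|\Delta_n| \le D(\mbx_n)\|\chi'\|_1 + o(n^{(\alpha-1)/2})$, so a contradiction argument will force $\limsup_n n^{(1-\alpha)/2}\delta(n) > 0$.

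Since (\ref{defXni}) is invariant under sign flip, $S(\mbx_n) \stackrel{d}{=} -S(\mbx_n)$, so $\chi$ must be odd. The accelerations sit at distance $\lambda_{n,i}/\sqrt n \lesssim n^{(\alpha-1)/2}$ from the origin in the rescaled variable, so I take $\chi(x) = \chi_0(x/\eps)$ with $\eps = n^{(\alpha-1)/2}$ and $\chi_0$ odd, smooth, compactly supported; then $\|\chi'\|_1 = \|\chi_0'\|_1$. Let $v(x,t) := \E[\chi((x+G_t)/\sqrt n)]$ with $G_t \sim \mclN(0,t)$, so $\partial_t v = \tfrac 1 2 \partial_x^2 v$ and $\|\partial_x^k v\|_\infty \le C\,n^{-k\alpha/2}$. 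Setting $\td v(x,i) := v(x,n-i)$, which satisfies $\partial_i \td v = -\tfrac 1 2 \partial_x^2 \td v$, $\td v(x,n) = \chi(x/\sqrt n)$ and $\td v(0,0) = \int \chi\,\d\Phi$, a telescoping argument yields $\Delta_n = \sum_{i=0}^{n-1} \E[\td v(S_{n,i+1}, i+1) - \td v(S_{n,i}, i)]$.

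A joint Taylor expansion in $x$ and $i$, combined with the heat equation for $\td v$, the martingale property, and the exact symmetry $\E[X_{n,i+1}^{2k+1}\,|\,\mcl{F}_{n,i}] = 0$ (the conditional laws in (\ref{defXni}) are all symmetric two-point), reduces $\Delta_n$ to
$$\frac{1}{4} \sum_{i = n - n^\alpha}^{n-1} \E\!\left[\td v_{xx}(S_{n,i}, i)\bigl(\1_{S_{n,i}\in I^+_{n,i+1}} - \1_{S_{n,i}\in I^-_{n,i+1}}\bigr)\right] + R_n,$$
where $R_n = O(n^{-(\alpha+1)/2})$ collects the quartic and time-curvature Taylor remainders. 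Using Proposition~\ref{p:deltaO} and (\ref{s2}) to replace the law of $S_{n,i}$ by $\Phi(\,\cdot\,/\sqrt i)$ with pointwise error $\delta(i) + O(n^{\alpha-1})$, integration by parts shows the accumulated error is $O(\delta(n)) + o(n^{(\alpha-1)/2})$. Since $\td v_{xx}$ is odd and $\lambda_{n,i+1}^2/i \le 2n^{\alpha-1} \to 0$, the Gaussian part simplifies to $\frac{2}{\sqrt{2\pi i}}[\td v_x(2\lambda_{n,i+1},i) - \td v_x(\lambda_{n,i+1},i)](1+o(1))$. Substituting $y_i = \lambda_{n,i+1}/n^{\alpha/2}$ and $\sigma_i^2 = (n-i)/n^\alpha$, and noting that $\kappa_n^2 = n^{1/2} = o(n^\alpha)$ forces $|y_i - \sigma_i| = O(n^{1/4-\alpha/2})$ uniformly, the sum over $i$ becomes a Riemann sum of width $n^{-\alpha}$ in $\sigma^2 \in [0,1]$, and I obtain
$$\Delta_n = \frac{I_{\chi_0}}{2\sqrt{2\pi}}\,n^{(\alpha-1)/2} + O(\delta(n)) + o(n^{(\alpha-1)/2}), \quad I_{\chi_0} := \int_0^1 \E\!\left[\chi_0'\bigl(\sqrt s(2+Z)\bigr) - \chi_0'\bigl(\sqrt s(1+Z)\bigr)\right] \d s,$$
with $Z \sim \mclN(0,1)$.

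To close, I pick $\chi_0$ with $I_{\chi_0} \ne 0$: after interchanging expectation and integral, $I_{\chi_0}$ pairs $\chi_0'$ with a non-trivial signed distribution (the difference of two distinct smooth densities on $\R$), so the linear functional $\chi_0 \mapsto I_{\chi_0}$ on odd smooth compactly supported test functions is not identically zero. Fixing such $\chi_0$ and arguing by contradiction, if one had $\delta(n) = o(n^{(\alpha-1)/2})$ then $|\Delta_n| \ge \frac{|I_{\chi_0}|}{4\sqrt{2\pi}}\,n^{(\alpha-1)/2}$ for all large $n$, while the CDF bound would give $|\Delta_n| \le \delta(n)\|\chi_0'\|_1 + o(n^{(\alpha-1)/2}) = o(n^{(\alpha-1)/2})$, a contradiction; therefore $\limsup_n n^{(1-\alpha)/2}\delta(n) > 0$, and since $\delta(n) \ge D(\mbx_n)$ the stated conclusion follows. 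The main obstacle is that the target main term $n^{(\alpha-1)/2}$ sits at exactly the same order of magnitude as the Gaussian-replacement error $O(\delta(n))$ provided by Proposition~\ref{p:deltaO}; this forces the indirect contradiction argument rather than a direct lower bound, and requires all higher-order Taylor and curvature remainders to be tracked tightly enough to be genuinely $o(n^{(\alpha-1)/2})$.
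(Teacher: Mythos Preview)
Your overall strategy---a Lindeberg/heat-equation decomposition against a scaled odd test function, reducing $\Delta_n$ to the second-order term $\tfrac14\sum_i\E[\td v_{xx}(S_{n,i},i)(\1_{I^+}-\1_{I^-})]$ and then replacing the law of $S_{n,i}$ by a Gaussian---is sound and genuinely different from the paper's proof. However, the sentence ``Since (\ref{defXni}) is invariant under sign flip, $S(\mbx_n)\stackrel{d}{=}-S(\mbx_n)$'' is \emph{false}, and if it were true your argument would collapse: for odd $\chi$ and symmetric $S(\mbx_n)$ one has $\E[\chi(S(\mbx_n)/\sqrt n)]=0=\int\chi\,\d\Phi$, hence $\Delta_n\equiv 0$, contradicting your lower bound. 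What is symmetric is the conditional law of each $X_{n,i}$ given $\mcl{F}_{n,i-1}$; the conditional \emph{variance}, however, is $3/2$ on $I^+_{ni}$ and $1/2$ on $I^-_{ni}$, and this asymmetry in $S_{n,i-1}$ is precisely what makes $S(\mbx_n)$ non-symmetric and $\Delta_n\ne 0$. The correct reason to take $\chi$ odd is that, after the Gaussian replacement of $S_{n,i}$, only the odd part of $\td v_{xx}$ survives in the main term. Two smaller points: the Gaussian-replacement error is controlled by $\max_{n-n^\alpha<i\le n}\delta(i)=\delta(\lceil n-n^\alpha\rceil)$ rather than $\delta(n)$ (harmless under the contradiction hypothesis, since $\delta$ is non-increasing and $i\sim n$), and the non-vanishing of $I_{\chi_0}$ needs to be checked on even $\chi_0'$ with $\int\chi_0'=0$, not on arbitrary test functions.

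For comparison, the paper avoids any test-function choice by working directly with $\P[S(\mbx_n)+\xi_n\le 0]-\tfrac12$, where $\xi_n$ is an independent Gaussian of variance $\kappa_n^2$, and performing the Lindeberg swap with i.i.d.\ standard Gaussians $Z_i$; the swap is then an expectation of differences of $\Phi(-\,\cdot\,/\lambda_{ni})$. The payoff is that the second-order contribution $\E\bigl[(X_{ni}^2-1)\,\phi'(-S_{n,i-1}/\lambda_{ni})\mid S_{n,i-1}\bigr]$ is \emph{pointwise nonnegative} on $I^+_{ni}\cup I^-_{ni}$ (because $\phi'$ is odd with $\phi'<0$ on $(0,\infty)$), so the lower bound follows without any non-degeneracy check. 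Your framework is more flexible and would transfer to other constructions, at the cost of having to exhibit a specific $\chi_0$ with $I_{\chi_0}\ne 0$.
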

\begin{pf}
Our aim is to contradict, by reductio ad absurdum, the claim that
%
\begin{equation}
\label{cond*}
\delta(i) = \mathrm{o}\bigl (i^{(\alpha-1)/2} \bigr) \qquad(i \to+ \infty).
\end{equation}
Let $Z_1,\ldots,Z_n$ be independent standard Gaussian random
variables, and let $\xi_n$ be an independent centered Gaussian random
variable with variance $\kappa_n^2$, all independent of $\mbx_{n}$.
Assuming (\ref{cond*}), we contradict the fact that
%
\begin{equation}
\label{contrad}
D(\mbx_n) = \mathrm{o} \bigl(n^{(\alpha- 1)/2}\bigr ).
\end{equation}
Let $W_{ni} = \sum_{j = {i+1}}^n Z_j + \xi_n$. Noting that $n^{-1/2}
\sum_{j = 1}^n Z_j$ is a standard Gaussian random variable, and with
the aid of~\cite{bo}, Lemma~1, we learn that
\[
\biggl| \P[W_{n0} \le0] - \frac{1}{2}\biggr | \le C \frac{\kappa_n}{\sqrt{n}}
\]
and, similarly,
\[
\biggl| \P[S_{nn} + \xi_n \le0] - \frac{1}{2} \biggr| \le C \biggl(D(\mbx_n) + \frac
{\kappa_n}{s(\mbx_{n})} \biggr).
\]
Combining these two observations with (\ref{s2}), we thus obtain that
%
\begin{equation}
\label{decompSn}
\P[S_{nn} + \xi_n \le0] - \P[W_{n0} \le0] \le C \biggl( D(\mbx_n) +
\frac{\kappa_n}{\sqrt{n}} \biggr).
\end{equation}
As $\kappa_n = n^{1/4}$ and $\alpha> 1/2$, we know that $\kappa
_n/\sqrt{n} = \mathrm{o}(n^{(\alpha- 1)/2})$. We decompose the left-hand side
of (\ref{decompSn}) as
\[
\sum_{i = 1}^n \P[S_{n,i-1} + X_{ni} + W_{ni} \le0] - \P[S_{n,i-1}
+ Z_i + W_{ni} \le0].
\]
The random variable $W_{ni}$ is Gaussian with variance ${\lambda
}_{ni}^2 = n-i+\kappa_n^2$ and is independent of $\mbx_n$; thus the
sum can be rewritten as
%
\begin{equation}
\label{sum}
\sum_{i = 1}^n \E\biggl[ \Phi\biggl(-\frac{S_{n,i-1} + X_{ni}}{{\lambda}_{ni}}
\biggr) - \Phi\biggl(-\frac{S_{n,i-1} + Z_i}{{\lambda}_{ni}} \biggr) \biggr].
\end{equation}
Let $\phi(x) = (2\uppi)^{-1/2} \mathrm{e}^{-x^2/2}$. We can replace
%
\begin{equation}
\label{eq3}
\Phi\biggl(-\frac{S_{n,i-1} + X_{ni}}{{\lambda}_{ni}} \biggr)
\end{equation}
by its Taylor expansion,
%
\begin{equation}
\label{taylor}
\Phi\biggl(-\frac{S_{n,i-1}}{{\lambda}_{ni}} \biggr) - \frac{X_{ni}}{{\lambda
}_{ni}} \phi\biggl( -\frac{S_{n,i-1}}{{\lambda}_{ni}} \biggr) + \frac
{X_{ni}^2}{2 {\lambda}_{ni}^2} \phi' \biggl( -\frac{S_{n,i-1}}{{\lambda
}_{ni}} \biggr),
\end{equation}
up to an error bounded by
%
\begin{equation}
\label{Taylorerror}
\frac{|X_{ni}|^3}{6{\lambda}_{ni}^3} \|\phi''\|_{\infty}.
\end{equation}
\begin{rems}\label{step1}
We show that the error term (\ref{Taylorerror}), after
integration and summation over $i$, is $\mathrm{o}(n^{(\alpha- 1)/2})$. Because
$X_{ni}$ is uniformly bounded, it suffices to show that
%
\begin{equation}
\label{sumo1}
\sum_{i = 1}^n \frac{1}{\lambda_{ni}^3} = \mathrm{o}\bigl (n^{(\alpha- 1)/2} \bigr).
\end{equation}
The foregoing sum equals
\[
\sum_{i = 1}^n \frac{1}{(n-i+\kappa_n^2)^{3/2}} \le n^{-1/2} \int
_{(\kappa_n^2 - 1)/n}^{(\kappa_n^2 +n)/n} x^{-3/2} \,\d x = \mathrm{O} (\kappa
_n^{-1} ).
\]
Because we defined $\kappa_n$ to be $n^{1/4}$ and $\alpha> 1/2$,
equation (\ref{sumo1}) is proved.
\end{rems}
\begin{rems}\label{step2}
For the second part of the summands in (\ref{sum}),
the same holds with $X_{ni}$ replaced by $Z_i$ and, similarly,
%
\begin{equation}
\label{sumo2}
\sum_{i = 1}^n \frac{\E[|Z_i|^3]}{{\lambda}_{ni}^3} = \mathrm{o}\bigl (n^{(\alpha
- 1)/2}\bigr ).
\end{equation}
\end{rems}
\begin{rems}
Combining the results of the two previous steps, we
know that up to a term of order $\mathrm{o} (n^{(\alpha- 1)/2} )$, the sum in
(\ref{sum}) can be replaced by
\[
\sum_{i = 1}^n \E\biggl[ \frac{Z_i - X_{ni}}{{\lambda}_{ni}} \phi\biggl(
-\frac{S_{n,i-1}}{{\lambda}_{ni}}\biggr ) + \frac{X_{ni}^2 - Z_i^2}{2
{\lambda}_{ni}^2} \phi' \biggl( -\frac{S_{n,i-1}}{{\lambda}_{ni}}\biggr ) \biggr].
\]
Conditional on $S_{n,i-1}$, both $Z_i$ and $X_{ni}$ are centered random
variables; thus the first part of the summands vanishes, and only the
following remains:
%
\begin{equation}
\label{sumcondi}
\sum_{i = 1}^n \E\biggl[ \frac{X_{ni}^2 - Z_i^2}{2 {\lambda}_{ni}^2} \phi
' \biggl( -\frac{S_{n,i-1}}{{\lambda}_{ni}} \biggr) \biggr] = \sum_{i = 1}^n \E\biggl[
\frac{\E[{X_{ni}^2 - 1 \vert S_{n,i-1}]}}{2 {\lambda}_{ni}^2} \phi' \biggl(
-\frac{S_{n,i-1}}{{\lambda}_{ni}} \biggr) \biggr].
\end{equation}
From the definition of $X_{ni}$, we learn that $\E[X_{ni}^2 - 1 \vert
S_{n,i-1}]$ is $0$ if $i \le n - n^\alpha$ but otherwise equals
\[
\left|
\begin{array}{l@{\qquad}l}
1/2 & \mbox{if } S_{n,i-1} \in I^+_{ni}, \\
- 1/2 & \mbox{if } S_{n,i-1} \in I^-_{ni}, \\
0 & \mbox{otherwise},
\end{array}   \right.
\]
where $I^+_{ni}$ and $I^-_{ni}$ are as defined in (\ref{defI}).
Consequently, it is clear that the contribution of each summand in the
right-hand side of (\ref{sumcondi}) is positive. Moreover, for $i > n
- n^\alpha$ and in the case where $S_{n,i-1} \in I^-_{ni} \cup
I^+_{ni}$, we have
\[
\E[X_{ni}^2 - 1 \vert S_{n,i-1}] \phi' \biggl( -\frac{S_{n,i-1}}{{\lambda
}_{ni}} \biggr) \ge\frac{1}{2} \inf_{[1,2]} |\phi' | > 0.
\]
Let us assume temporarily that, uniformly over $n$ and $i$ such that $n
- n^\alpha< i \le n - (n^\alpha)/2$, we have
%
\begin{equation}
\label{tempassump}
\P[S_{n,i-1} \in I^-_{ni} \cup I^+_{ni}] \ge C \frac{\lambda
_{ni}}{\sqrt{n}}.
\end{equation}
Then the sum in the right-hand side of (\ref{sumcondi}) is, up to a
constant, bounded from below by
\[
\sum_{n - n^\alpha< i \le n - (n^\alpha)/2} \frac{1}{\lambda_{ni}
\sqrt{n}} \ge C {n^\alpha} \frac{1}{n^{\alpha/2} \sqrt{n}} = C
{n^{(\alpha-1)/2}}.
\]
This contradicts (\ref{contrad}) via inequality (\ref{decompSn}), and
thus completes the proof of the proposition.
\end{rems}
\begin{rems}
There remains to show (\ref{tempassump}), for $n -
n^\alpha< i \le n - (n^\alpha)/2$. We have
\[
\biggl|\P[S_{n,i-1} \in I^+_{ni}] - \int_{I^+_{ni}/s(\mbx_{n,i-1})} \d
\Phi \biggr| \le2 \delta(i-1).
\]
Using inequality (\ref{s2}), it follows that
\[
\int_{I^+_{ni}/s(\mbx_{n,i-1})} \d\Phi\ge C \frac{\lambda
_{ni}}{\sqrt{n}}.
\]
Because we choose $i$ inside $[n - n^\alpha, n - (n^\alpha)/2]$,
$\lambda_{ni}$ is larger than $Cn^{\alpha/2}$, whereas $\delta(i -
1) = \mathrm{o} (i^{(\alpha- 1)/2} )$ by assumption (\ref{cond*}). This proves
(\ref{tempassump}).
\end{rems}
\upqed\end{pf}
\begin{rem*} To match the example proposed by~\cite{bo}, $\alpha
= 1/3$ and $\kappa_n = 1$ should be used in the definition of the
sequences $(\mbx_n)$. In this case, Propositions~\ref{p:deltaO} and
\ref{p:ex2} still hold. Although the proof of Proposition~\ref
{p:deltaO} can be kept unchanged, Proposition~\ref{p:ex2} requires a
more subtle analysis. First, $\xi_n$ of variance $\ov{\kappa}_n^2
\neq1$ must be chosen, which requires changing the $\lambda_{ni}$
appearing in (\ref{sum}) by, say, $\ov{\lambda}_{ni} = \sqrt{n - i
+ \ov{\kappa}_n^2}$. The sequence $\ov{\kappa}_n^2$ should grow to
infinity with $n$, while remaining $\mathrm{o}(n^\alpha)$. In Step~\ref{step1}, bounding
the difference between (\ref{eq3}) and (\ref{taylor}) by (\ref
{Taylorerror}) is too crude. Instead, it can be bounded by
\[
\frac{C}{\ov{\lambda}_{ni}^3} \Psi\biggl( - \frac{S_{n,i-1}}{\ov
{\lambda}_{ni}} \biggr),
\]
where $\Psi(x) = \sup_{|y| \le1} |\phi''(x+y)|$. One can then
appeal to~\cite{bo}, Lemma 2, and get through this step, using the fact
that $\ov{\kappa}_n$ tends to infinity. Step~\ref{step2} is similar, but with
some additional care required because $Z_i$ is unbounded. The rest of
the proof then applies, taking note of the discrepancy between $\lambda
_{ni}$ and $\ov{\lambda}_{ni}$ when necessary.

\end{rem*}
\section*{Acknowledgements}
I would like to thank an anonymous referee
for his careful review, and for his mention of the reference \cite
{joos}, which I was not aware of.


\printhistory

\end{document}